\def\qq{{\mathcal Q}}
\def\pp{{\mathcal P}}
\def\tt{{\mathcal T}}
\def\RR{{\mathbb R}}
\theoremstyle{plain}
\newtheorem{thm}{Theorem}
\newtheorem{lem}[thm]{Lemma}
\newtheorem{cor}[thm]{Corollary}
\newtheorem{prop}[thm]{Proposition}
\newtheorem{obs}[thm]{Observation}
\newtheorem{stat}[thm]{Statement}
\theoremstyle{definition}
\theoremstyle{plain}
\numberwithin{equation}{section}
\begin{document}


\title{Disjointness graphs of segments in ${\mathbb R}^2$ are almost all hamiltonian}

\author{J. Lea\~nos$^1$ \and Christophe Ndjatchi$^{2}$ \and L. M. R\'ios-Castro$^{3}$ }
\maketitle{}

\noindent\footnote{
             Unidad Acad\'emica de Matem\'aticas, Universidad Aut\'onoma de Zacatecas, M\'exico.
             Email: jleanos@uaz.edu.mx       \\
             $^{2}$Academia de F\'isico-Matem\'aticas, Instituto Polit\'ecnico Nacional, UPIIZ,
  P.C. 098160, Zacatecas, M\'exico. \\Email: mndjatchi@ipn.mx \\
            $^{3}$Academia de F\'isico-Matem\'aticas, Instituto Polit\'ecnico Nacional, CECYT18, Zacatecas,
  P.C. 098160, Zacatecas, M\'exico. \\Email: lriosc@ipn.mx\\}

\begin{abstract}
Let $P$ be a set of  $n\geq 2$ points in general position in ${\mathbb R}^2$. The {\em edge disjointness graph} $D(P)$ of $P$ is the graph whose
vertices are all the closed straight  line segments with endpoints in $P$, two of which are adjacent in $D(P)$ if and only if they are disjoint.
In this note, we give a full characterization of all those edge disjointness graphs that are hamiltonian. More precisely, we shall show that (up to order type isomorphism)
there are exactly 8 instances of $P$ for which $D(P)$ is not hamiltonian. Additionally, from one of these 8 instances, we derive a counterexample to a
 criterion for the existence of hamiltonian cycles due to A. D. Plotnikov in 1998.
\end{abstract}

{\it Keybwords:} Disjointness graph of segments; Hamiltonian cycles; Rectilinear drawings of complete graphs.\\
{\it AMS Subject Classification Numbers:} 05C10, 05C45.

\section{Introduction}

Let $P$ be a set of $n\geq 2$ points in general position in the plane, i.e., no three points in $P$ are collinear.
A {\em segment} of $P$ is a closed straight line segment with its two endpoints being elements of $P$. In this note, we shall use $\pp$
 to denote the set of all $\binom{n}{2}$ segments of $P$. The  {\em edge disjointness graph}
 $D(P)$ of $P$ is the graph whose vertex set is $\pp$, and two segments of $\pp$ are adjacent in $D(P)$ if and only if
 they are disjoint.

 If $x$ and $y$ are distinct points of $P$, we shall use $xy$ to denote the closed straight line segment whose endpoints are $x$ and $y$.
 We often make no distinction between an element of $\pp$ and its corresponding vertex in $D(P)$. As usual, we will denote by $CH(P)$ the boundary of the
 convex hull of $P$,  and by $\overline{P}$ to $P\cap CH(P)$. Then, if $P$ is in convex position, we have $P=\overline{P}$. See Figure~\ref{fig:Ejem}.


The edge disjointness graphs were introduced in 2005 by Araujo, Dumitrescu, Hurtado, Noy, and Urrutia~\cite{gaby}, as geometric versions of the Kneser graphs. We recall that for $k,m\in {\mathbb Z}^+$ with $k\leq  m/2$, the \emph{Kneser graph} $KG(m; k)$ is defined as the graph whose vertices are all the $k$--subsets of $\{1,2,\ldots ,m\}$ and in which two $k$-subsets form an edge if and only if they are disjoint. In~\cite{kneser} Kneser conjectured that the chromatic number $\chi(KG(m; k))$ of $KG(m; k)$ is equal to $m-2k+2$. This conjecture was proved by Lov\'asz~\cite{lovasz} in 1978 and, as the reader can check in~\cite{chen,baptist,matousek} and the references therein,
the research on Kneser graphs is still of interest.

The chromatic number $\chi(D(P))$ of $D(P)$ has been studied in~\cite{gaby,lomeli,jonsson,ruy}. As far as we know, the determination of the exact value of $\chi(D(P))$ remains open in general. In~\cite{gaby} a general lower bound for $\chi(D(P))$ was established.  On the other hand,  the exact value of
$\chi(D(P))$ is known only for two families of point sets: when $P$ is in convex position~\cite{ruy,jonsson}, and when $P$ is a double chain~\cite{lomeli}.
In ~\cite{pach-tardos-toth} Pach, Tardos, and T\'oth studied the chromatic number and the clique number of $D(P)$ in the more general setting of $\RR^d$ for $d\geq 2$, i.e., when $P\subset \RR^d$. More precisely, in~\cite{pach-tardos-toth} it was shown that the chromatic number of $D(P)$ is bounded by above by a polynomial function that depends on its clique number $\omega(D(P))$,  and that the problem of determining any of $\chi(D(P))$ or $\omega(D(P))$ is NP-hard. More recently, Pach and Tomon~\cite{pach-tomon} have shown that if $G$ is the disjointness graph of certain types of curves (namely, grounded $x$-monotone curves) in $\RR^2$ and $\omega(G)=k$, then $\chi(G)\leq k+1$.
 We remark that the grounded $x$-monotone curves in~\cite{pach-tomon}  play the role of the straight line segments in $\pp$, i.e., two grounded $x$-monotone curves are adjacent in $G$ iff they are disjoint.

The exact determination of the connectivity number $\kappa(D(P))$ of $D(P)$ is another open problem. In~\cite{us2} and~\cite{us1} the following upper and lower bounds have been reported, respectively. $$\binom{\lfloor\frac{n-2}{2}\rfloor}{2}+\binom{\lceil\frac{n-2}{2}\rceil}{2} \leq \kappa(D(P))\leq \frac{7n^2}{18}+\Theta(n).$$

We recall that a subset $\qq\subseteq \pp$ is {\em independent} in $D(P)$, if no two elements of $\qq$ are adjacent in $D(P)$. In~\cite{birgit}, Aichholzer, Kyn\v{c}l, Scheucher, and Vogtenhuber have established an asymptotic upper bound for the maximum size of a certain class of independent sets of $D(P)$.
 \begin{figure}[H]
	\centering
	\includegraphics[width=.65\textwidth]{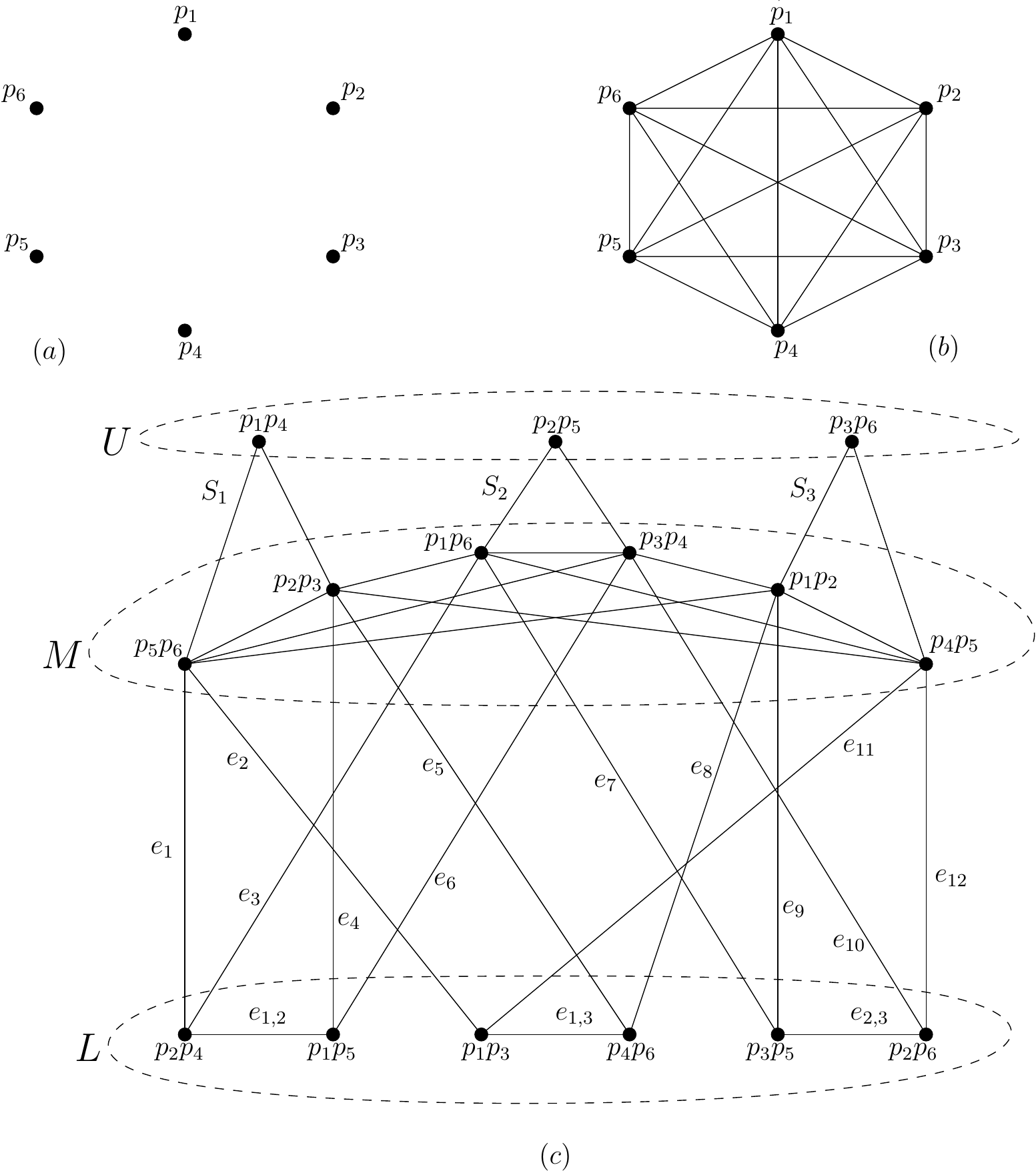}
	\caption{\small ($a$) The set $\{p_1,p_2,\ldots ,p_6\}$ of points in general (and convex) position is $P$. We note that $\overline{P}=P$. In (b) we have the corresponding
	$\pp$, which can be seen as a rectilinear drawing of the complete graph $K_6$. Note that $CH(P)$ is the convex polygon formed by the segments $p_1p_2, p_2p_3, p_3p_4, p_4p_5, p_5p_6,$ and $p_6p_1$. Finally, the graph in (c) is the edge disjointness graph $D(P)$ defined by $P$.}
\label{fig:Ejem}
\end{figure}
Let $H=(V(H),E(H))$ be a simple connected graph.  We recall that a {\em cycle} $C$ of $H$ is a sequence
$C:=v_0, v_1, \ldots , v_{k-1}, v_k, v_0$ satisfying the following:
\begin{itemize}
\item $k$ is an integer such that $2\leq k< |V(H)|$,
\item $\{v_0, v_1,\ldots ,v_k\}\subseteq V(H)$,
\item $v_iv_{i+1}\in E(H)$ for each $i\in \{0,1,\ldots ,k\}$, where $i+1$ is taken$\mod k$.
\item $v_i\neq v_j$ for distinct $i,j\in \{0,1,\ldots ,k\}$,
\end{itemize}

The graph $H$ is called {\em hamiltonian} if it contains a {\em spanning} cycle, i.e. a cycle passing through each vertex of $H$.
To determine whether or not a given connected graph $H$ is hamiltonian is a classical problem in graph theory that remains tantalizingly open. See~\cite{gould}
for a good recent survey on this problem.
In particular, it is well-known that  such a problem is NP-complete~\cite{garey-johnson}. In view of this fact, many standard approaches have been developed for the study of
hamiltonicity of graphs. For instance, the determination of necessary and/or sufficient conditions under which a graph contains a spanning cycle is one of them.
In this note, we follow another common approach, which can be written as follows:  {\em given a nontrivial family of graphs ${\mathcal G}$, provide a characterization of
all those elements of ${\mathcal G}$ that are hamiltonian.}  In this work, our interest lies in the following family.
$${\mathcal G}=\{D(P)~:~P \mbox{ is a finite point set in general position of } {\mathbb R}^2  \mbox{ with } |P|\geq 2\}.$$

Let $a= (x_1, y_1), b= (x_2, y_2)$ and $c= (x_3, y_3)$ be points in ${\mathbb R}^2$. The {\em orientation} of the ordered triple $abc$ is the sign of $O(abc)$, where
\[O(abc):=\left|
\begin{array}{ccc}

1 & x_1 & y_1 \\

1 & x_2 & y_2 \\

1 & x_3 & y_3
\end{array}
\right|.
\]
Let $P$ and $Q$ be two finite point sets in general position in ${\mathbb R}^2$. These point sets have the same {\em order type} if and only if there is a bijection
$\gamma:P\to Q$ such that each ordered triple $abc$ in $P$ has the same orientation as its image $\gamma(a)\gamma(b)\gamma(c)$.
We write $P\sim Q$ if $P$ and $Q$ have the same order type, and $P\not\sim Q$ otherwise. In particular, it is well known that
$\sim$ defines an equivalence relation on the finite point sets in general position in ${\mathbb R}^2$.

Note that if $\gamma:P\to Q$ is a bijection, then $\gamma$ can be naturally extended to a bijection $\Gamma: \pp\to \qq$. Moreover, if $\gamma$ verifies that $P\sim Q$,
then (see for instance~\cite{3-symmetric}) two segments $e$ and $e'$ of $\pp$ are disjoint if and only if their corresponding segments $\Gamma(e)$ and
$\Gamma(e')$ are disjoint, and so $P\sim Q$ implies that $D(P)$ and $D(Q)$ are isomorphic. In view of this, we need only consider a representative
of each equivalence class of $\sim$.

Our main result is the following.

\begin{thm}\label{thm:main}
Let $P$ be a set of $n\geq 2$ points in general position in the plane, and let $C_6$ denote the set of 6 points in convex position in ${\mathbb R}^2$. Then,
$D(P)$ is hamiltonian if and only if $n\geq 6$ and $P\not\sim C_6$.
\end{thm}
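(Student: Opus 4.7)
The plan is to prove both directions of Theorem~\ref{thm:main}: (necessity) the eight exceptional instances yield non-hamiltonian $D(P)$, and (sufficiency) every other $P$ yields a hamiltonian $D(P)$.

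For necessity I would enumerate the exceptional instances. The seven order types with $n \leq 5$---one each for $n \in \{2,3\}$, two for $n = 4$, and three for $n = 5$---produce disjointness graphs that are either too small or too sparse to contain a spanning cycle, which is checked by direct inspection (for instance, $D(P)$ is edgeless when $n=3$). For the eighth instance $P \sim C_6$, I would invoke a structural obstruction in $D(C_6)$: each of the three long diagonals $p_1p_4, p_2p_5, p_3p_6$ has degree exactly~$2$, which forces three specific length-$2$ paths (one through each long diagonal) inside any Hamilton cycle. A short case analysis of how the six short diagonals---each adjacent only to two opposite sides and one opposite short diagonal---can glue the endpoints of the three forced paths then shows that no spanning cycle can be completed.

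For sufficiency I would proceed by induction on $n$. The base cases are $n = 6$ with $P \not\sim C_6$ (four order types) and $n = 7$ (all order types, including $P \sim C_7$); these are verified by exhibiting explicit Hamilton cycles, with the convex case $C_n$ for $n \geq 7$ treated uniformly by a cyclic ``chord-length'' construction that lists segments $p_ip_j$ in an order determined by $|i - j| \bmod n$. For the inductive step with $n \geq 8$, any $p \in P$ satisfies $|P \setminus \{p\}| = n - 1 \geq 7$, so in particular $P' := P \setminus \{p\} \not\sim C_6$, and the induction hypothesis yields a Hamilton cycle $H'$ of $D(P')$. The task is to extend $H'$ to a Hamilton cycle $H$ of $D(P)$ by inserting the $n-1$ segments of $\pp$ incident to $p$.

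The main obstacle is this insertion step. Since any two segments through $p$ share $p$, they are non-adjacent in $D(P)$; consequently the $n-1$ new vertices must occupy $n-1$ distinct ``slots'' of $H'$, and inserting $pp_j$ between consecutive vertices $e, e'$ of $H'$ requires $pp_j$ to be disjoint from both $e$ and $e'$. I would establish feasibility via a Hall-type matching argument: picking $p$ on the convex hull of $P$ and ordering the remaining points $p_1, \ldots, p_{n-1}$ radially around $p$, the candidate slots for $pp_j$ are those whose two bounding segments both lie on a common side of the line through $p$ and $p_j$, and the number of segments of $\pp'$ with this property grows as $\binom{j-1}{2} + \binom{n-1-j}{2}$. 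The crux is verifying Hall's condition simultaneously for all $j$, which I would address by selecting or locally modifying $H'$ so that its cyclic order is compatible with the radial ordering of the $p_j$'s around $p$.
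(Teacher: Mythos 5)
Your necessity argument follows essentially the same lines as the paper: the $C_6$ obstruction via the three degree-$2$ long diagonals forcing three subpaths of any spanning cycle, followed by a case analysis on how the short diagonals can join them, is exactly the paper's argument. One caveat on the small cases: for $n=5$ the graph $D(P)$ \emph{is} connected, so ``too small or too sparse'' is not a proof; the paper gives a genuine argument there, using the two diagonals of a convex quadrilateral $Q\subset P$ (each of which has at most two neighbours, all incident with the fifth point) and showing that the forced subpaths cannot be completed to a spanning cycle. Also, there are $15$, not four, non-convex order types on $6$ points, and $135$ on $7$ points, so your base cases already amount to a $150$-case verification.

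The sufficiency direction is where you diverge from the paper and where there is a genuine gap. The paper handles $n\ge 9$ cleanly with the Chv\'atal--Erd\H{o}s theorem: $\alpha(D(P))\le n$ by Kupitz's thrackle bound, while $\kappa(D(P))\ge \binom{\lfloor (n-2)/2\rfloor}{2}+\binom{\lceil (n-2)/2\rceil}{2}\ge n$ once $n\ge 9$; the remaining $3465$ order types with $6\le n\le 8$ are checked by computer. Your inductive insertion step does not go through as described. The $n-1$ segments through $p$ form an independent set, so they must occupy $n-1$ distinct slots of $H'$, and $pp_j$ fits into a slot only if \emph{both} bounding vertices of that slot are neighbours of $pp_j$. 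If $pp_j$ has $d_j$ neighbours among the $\binom{n-1}{2}$ vertices of $H'$, the only bound valid for an arbitrary $H'$ on the number of feasible slots is $2d_j-\binom{n-1}{2}$. For $p_j$ in the middle of the radial order around $p$, your own count gives $d_j\approx\binom{(n-2)/2}{2}+\binom{(n-2)/2}{2}\approx\tfrac12\binom{n-1}{2}$, so this bound is negative: the Hamilton cycle $H'$ handed to you by the induction hypothesis may offer \emph{no} feasible slot for such a $pp_j$, let alone a system of distinct representatives for all $n-1$ insertions. Repairing this requires strengthening the inductive hypothesis to produce an $H'$ whose cyclic order is compatible with the radial order about the point to be inserted --- essentially a new structural theorem that you have not proved; ``selecting or locally modifying $H'$'' is precisely the missing content, and it is the crux rather than a technicality.
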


For $n\in {\mathbb Z}^+$, let $\#ot(n)$ denote the number of order types on $n$ points in general position in ${\mathbb R}^2$.
As can be checked in~\cite{oswin1},  the exact values of $\#ot(n)$ for $n=2,3,\ldots ,8$ are the given in the following table.

\begin{table}[h!]
\centering
\begin{tabular}{|c c c c c c c c|}
 \hline
  $n$ & 2 & 3 & 4 & 5 & 6 & 7 & 8 \\
  \hline
 $\#ot(n)$  & 1 &1 & 2 & 3 & 16 & 135 & 3315 \\  [1ex]
 \hline
\end{tabular}
\caption{The exact value of $\#ot(n)$ for $n\in \{2,3,\ldots ,8\}$.}
\label{table1}
\end{table}

The next corollary follows directly from Theorem~\ref{thm:main} and Table~\ref{table1}.

\begin{cor}\label{cor:main}
Let $P$ be a set of $n\geq 2$ points in general position in the plane. There are exactly 8 (order types) instances of $P$ for which
$D(P)$ is not hamiltonian.
\end{cor}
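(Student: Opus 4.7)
The plan is to derive Corollary~\ref{cor:main} as a direct bookkeeping consequence of Theorem~\ref{thm:main} together with the order-type counts recorded in Table~\ref{table1}. By Theorem~\ref{thm:main}, $D(P)$ fails to be hamiltonian if and only if one of the following holds: (i) $n\in\{2,3,4,5\}$, or (ii) $n=6$ and $P\sim C_6$. Because the relation $\sim$ preserves cardinality, these two cases are mutually exclusive, and together they exhaust every non-hamiltonian instance up to order-type isomorphism.

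First I would sum the relevant entries of Table~\ref{table1} for case (i), which gives $\#ot(2)+\#ot(3)+\#ot(4)+\#ot(5)=1+1+2+3=7$ order types. Then, for case (ii), I would observe that the condition $P\sim C_6$ singles out exactly one of the $\#ot(6)=16$ order types on six points: indeed, any two $6$-point sets in convex position are related by an orientation-preserving affine map, and hence by an order-type preserving bijection, so all such configurations collapse to a single equivalence class under $\sim$. Adding this single extra class to the previous seven yields the asserted total of $7+1=8$.

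The argument is essentially arithmetic, so there is no substantive obstacle beyond the two easy observations above. All the geometric and combinatorial content is already packaged inside Theorem~\ref{thm:main}; the only conceptual point to double-check is that case (ii) genuinely contributes exactly one and not several order types, and this is immediate from the affine homogeneity of convex $n$-gons together with the fact that an orientation-preserving affine bijection preserves the sign of the determinant $O(abc)$ defining $\sim$.
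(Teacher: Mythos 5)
Your proposal is correct and matches the paper's (unstated, one-line) argument exactly: sum $\#ot(2)+\#ot(3)+\#ot(4)+\#ot(5)=7$ from Table~\ref{table1} and add the single order type of six points in convex position. One small repair: your justification that convex hexagons form one order type via an orientation-preserving affine map is not right (a generic convex hexagon is not an affine image of another), but the conclusion is immediate anyway, since labelling the points of each set in counterclockwise convex-position order makes every ordered triple $p_ip_jp_k$ with $i<j<k$ positively oriented, so the induced bijection preserves all orientations.
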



\section{The proof of Theorem~\ref{thm:main}}\label{sec:main}
Throughout this section $P$ and $\pp$ denote a set of $n\geq 2$ points in general position in ${\mathbb R}^2$ and the set of all $\binom{n}{2}$ segments of $P$, respectively.

The proof of the next proposition is an easy exercise.
 \begin{prop}\label{p:ge5}
$D(P)$ is connected if and only if $n\geq 5$.
\end{prop}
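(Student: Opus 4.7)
The proposition splits into two directions. The ``only if'' direction ($D(P)$ connected $\Rightarrow n \geq 5$) is a finite check, since Table~\ref{table1} lists at most two order types for each $n \leq 4$. The ``if'' direction is the substantive part, and I plan to prove it by induction on $n$ starting from the base case $n = 5$.

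For the ``only if'' direction I would argue case by case. When $n = 2$, $|\pp| = 1$ and $D(P)$ is a single vertex. When $n = 3$, any two of the three segments of $P$ share an endpoint, so $D(P)$ has no edges and three isolated vertices. When $n = 4$, Table~\ref{table1} gives exactly two order types --- the convex quadrilateral and the triangle with one interior point --- and a direct enumeration of the six segments in each shows that the disjoint pairs form precisely the three perfect matchings of $P$, so $D(P)$ has three components.

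For the inductive step of the ``if'' direction, I would assume $n \geq 6$ and pick any point $p \in P$; applying the induction hypothesis to $P' := P \setminus \{p\}$ shows that the subgraph of $D(P)$ induced by segments with both endpoints in $P'$ is connected (disjointness of two segments depends only on the segments themselves, not on the ambient point set). To finish, I would connect each ``new'' segment $pq$ (with $q \in P'$) to this big component as follows. Consider the line $\ell$ through $p$ and $q$: by general position $\ell$ meets no other point of $P$, so it partitions $P \setminus \{p, q\}$ --- a set of $n - 2 \geq 4$ points --- into two open half-planes. By pigeonhole, one side contains at least two points $y, z$; then $yz$ lies strictly on that side of $\ell$, hence is disjoint from $pq$, and since $y, z \in P' \setminus \{q\}$, $yz$ is a valid vertex of $D(P')$ providing the desired bridge.

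The main obstacle is the base case $n = 5$, where the pigeonhole reduces to only $n - 2 = 3$ points and a $2{-}1$ split can break the one-step bridging argument above. Fortunately Table~\ref{table1} lists only three order types for $n = 5$ (convex pentagon; four on the hull with one interior; triangle with two interior), and for each of them I would anchor on a convex hull edge $e_0 = uv$, observe that every segment with both endpoints in $P \setminus \{u, v\}$ is automatically disjoint from $e_0$ (since the remaining $n - 2$ points of $P$ all lie strictly on one side of the line through $uv$), and then verify by inspection that each of the remaining segments (namely those incident to $u$ or $v$) reaches $e_0$ by a path of length at most three in $D(P)$. This finite, geometry-specific check is the only non-mechanical part of the plan.
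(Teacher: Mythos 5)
The paper does not actually supply a proof of Proposition~\ref{p:ge5} --- it is dismissed as ``an easy exercise'' --- so there is no official argument to compare yours against; I can only assess your plan on its own merits, and it is essentially sound. The induction for $n\geq 6$ is clean: disjointness is intrinsic to the two segments, so $D(P\setminus\{p\})$ really is an induced subgraph of $D(P)$, and the pigeonhole on the $n-2\geq 4$ points off the line through $p$ and $q$ correctly produces a segment of $P\setminus\{p\}$ disjoint from $pq$. The hull-edge anchor for the base case $n=5$ is also the right idea; I spot-checked the three order types and the length-$\leq 3$ paths you promise do exist (for the convex pentagon $D(P)$ is the Petersen graph, which has diameter $2$). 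One factual slip in the ``only if'' direction: for $n=4$ in convex position the two diagonals \emph{cross}, so they are not adjacent in $D(P)$; only two of the three perfect matchings yield edges, and $D(P)$ consists of two independent edges plus two isolated vertices (four components, not three). Your conclusion that $D(P)$ is disconnected survives, but the claim that ``the disjoint pairs form precisely the three perfect matchings'' holds only for the triangle-with-interior-point order type. A smaller quibble: for $n=2$ the graph $D(P)$ is a single vertex, which under the standard convention is connected, so the failure of the ``only if'' direction at $n=2$ is really an infelicity of the proposition as stated (shared with the paper) rather than something your case analysis resolves.
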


In view of Proposition~\ref{p:ge5}, for the rest of the note we can assume that $n\geq 5$.
We analyze three cases separately, depending on the size of $n$ and the structure of $P$.

\subsection{$D(P)$ is hamiltonian if $n\geq 9$}\label{subsec:nine}
A {\em thrackle} of $\pp$ is a set $\tt\subseteq \pp$ such that each pair of segments of $\tt$
either has a common endpoint or crosses properly. The following proposition was shown in~\cite{kupitz}.

\begin{prop}\label{p:max-track}
The maximum number of segments in a thrackle of $\pp$ is either $n-1$ or $n$.
\end{prop}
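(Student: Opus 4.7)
My proposal is to establish the claim in two pieces, with an easy lower bound and a harder upper bound.

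For the lower bound, fix any point $p\in P$ and consider the star $S_p:=\{pq : q\in P\setminus\{p\}\}\subseteq \pp$. Since every two segments in $S_p$ share the common endpoint $p$, the set $S_p$ is a thrackle of size $n-1$. Hence the maximum thrackle of $\pp$ contains at least $n-1$ segments.

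For the upper bound, the task is to show that every thrackle $\tt\subseteq \pp$ satisfies $|\tt|\leq n$, which is the straight-line version of the classical thrackle bound. I would proceed by induction on $n$, verifying the small base cases directly. For the inductive step, choose a vertex $p$ of the convex hull $CH(P)$ and split $\tt=\tt_p\cup \tt'$, where $\tt_p$ consists of the segments of $\tt$ incident to $p$ and $\tt'=\tt\setminus \tt_p$. Since $\tt'$ is a thrackle on $P\setminus\{p\}$, the induction hypothesis yields $|\tt'|\leq n-1$. The geometric heart of the argument exploits the position of $p$ on $CH(P)$: all other points of $P$ lie in an open halfplane determined by a line of support at $p$, so the segments of $\tt_p$ are linearly ordered by the angle their direction from $p$ makes with this supporting line. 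Combining this angular order with the thrackle condition that every segment in $\tt_p$ must share a point with every segment in $\tt'$ yields the structural bound $|\tt_p|+|\tt'|\leq n$.

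The main obstacle is to execute this last geometric step rigorously; in particular, one must understand precisely when the induction is tight, so as to distinguish the configurations in which the maximum thrackle attains size $n$ from those in which it attains only size $n-1$. Concretely, if a segment $e\in\tt_p$ is not extremal in the angular order at $p$, then the thrackle condition applied to $e$ together with the two edges of $\tt_p$ flanking it and the edges of $\tt'$ that cross them forces a forbidden configuration, which is the crux of the inductive accounting. This is essentially the content of Kupitz's original argument in~\cite{kupitz}, and parallels the Hopf--Pannwitz style convex-hull peeling used to bound the number of diameter-realizing pairs in a finite planar point set.
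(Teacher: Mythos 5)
The paper does not actually prove Proposition~\ref{p:max-track}: it is quoted from Kupitz~\cite{kupitz}, so there is no internal argument to compare against. Judged on its own terms, your lower bound (the star at a point $p$ is a thrackle of size $n-1$) is correct and complete, and together with an upper bound of $n$ it would indeed yield the statement. The gap is that the upper bound --- which is the entire content of the proposition --- is never established. The sentence ``combining this angular order with the thrackle condition \ldots{} yields the structural bound $|\tt_p|+|\tt'|\leq n$'' asserts the theorem rather than proving it, and you then explicitly label the execution of this step as ``the main obstacle'' before deferring to Kupitz. A proof that names its decisive step as an unexecuted obstacle is a plan, not a proof.

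There is also a concrete problem with the inductive scaffolding as you have set it up. The induction hypothesis gives $|\tt'|\leq n-1$, but this is useless for concluding $|\tt|\leq n$ unless $|\tt_p|\leq 1$, which fails in general (in the star example $|\tt_p|=n-1$; in the tight examples built from odd cycles every vertex, including hull vertices, carries two thrackle edges). So the case $|\tt_p|\geq 2$ cannot be closed by adding the inductive bound to a separate bound on $|\tt_p|$; it requires the genuinely geometric argument (Perles-style charging of each edge to an endpoint at which it is not angularly extremal, or Kupitz's case analysis), and that is precisely the part you have omitted. If you intend to rely on Kupitz's theorem, the honest move is to cite it outright, as the paper does; if you intend to prove it, the angular-extremality/charging step must actually be carried out.
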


The next assertion follows immediately from the definitions of $D(P)$ and thrackle of $\pp$.

\begin{obs}\label{o:thrack}
 A set of vertices in $D(P)$ is independent if and only if its corresponding set of segments in $\pp$ forms a thrackle.
\end{obs}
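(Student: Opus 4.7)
The plan is to derive the observation from a simple trichotomy for pairs of segments in $\pp$, and then read off the equivalence by negation. The key trichotomy is this: for any two distinct segments $s_1, s_2 \in \pp$, exactly one of the following holds: (i) $s_1$ and $s_2$ are disjoint; (ii) $s_1$ and $s_2$ share a common endpoint; or (iii) $s_1$ and $s_2$ cross properly, i.e., meet at a single point that lies in the relative interior of both. Once this is established, the observation is immediate: by definition, two vertices of $D(P)$ are non-adjacent iff the corresponding segments are not disjoint, i.e., fall into case (ii) or (iii); a vertex set is independent iff this holds for every pair; and the set of segments forms a thrackle iff every pair falls into case (ii) or (iii).

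So the only real step is the trichotomy, which I would argue as follows. Write $s_1 = ab$ and $s_2 = cd$ with $a,b,c,d \in P$. If $\{a,b\} \cap \{c,d\} \neq \emptyset$, then (ii) holds. Otherwise $a,b,c,d$ are four distinct points of $P$. If $s_1 \cap s_2 = \emptyset$, then (i) holds. Otherwise there exists $p \in s_1 \cap s_2$. I would check that $p$ cannot be an endpoint of either segment: if, say, $p = a$, then $a \in s_2 = cd$, so $a,c,d$ are three distinct collinear points of $P$, contradicting general position; the same argument rules out $p \in \{b,c,d\}$. Hence $p$ lies in the relative interior of both $s_1$ and $s_2$, giving (iii). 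Uniqueness of such an intersection point follows because two distinct straight line segments can share at most one interior point unless they lie on a common line, which is again excluded by general position.

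The main (and only) obstacle is checking that the three cases are indeed mutually exclusive and exhaustive; after that, the biconditional is just a matter of quantifying over pairs and negating. Since the argument uses nothing beyond the definitions of $D(P)$, $\pp$, and thrackle, together with the standing assumption that $P$ is in general position, I expect the write-up to be only a few lines.
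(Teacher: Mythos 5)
Your argument is correct and matches the paper, which states the observation without proof as an immediate consequence of the definitions; your write-up simply makes explicit the trichotomy (disjoint / common endpoint / proper crossing) that general position guarantees and that the paper leaves implicit. No gaps.
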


In view of Proposition~\ref{p:max-track} and Observation~\ref{o:thrack} we have the following corollary.

\begin{cor}\label{p:independence}
The independence number $\alpha(D(P))$ of $D(P)$ is either $n-1$ or $n$.
\end{cor}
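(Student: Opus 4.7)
The plan is to observe that this corollary is an immediate consequence of combining the two preceding statements, so the proof amounts to chaining them. First I would unpack the definition of independence in $D(P)$: a set $\ii\subseteq \pp$ is independent if no two of its elements are adjacent in $D(P)$, which by the definition of the edge disjointness graph means that any two segments in $\ii$ fail to be disjoint. Two segments in $\pp$ fail to be disjoint exactly when they share an endpoint or meet in their relative interiors, and since $P$ is in general position, the latter means they cross properly. Hence $\ii$ is an independent set in $D(P)$ if and only if it is a thrackle of $\pp$, which is the content of Observation~\ref{o:thrack}.

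With this equivalence in hand, the independence number $\alpha(D(P))$ is literally equal to the maximum size of a thrackle contained in $\pp$. By Proposition~\ref{p:max-track} (the Kupitz bound), this maximum size is either $n-1$ or $n$, which is precisely the conclusion of the corollary. I would also remark, for the reader's benefit, that both values are realized: for instance, a star of segments through any fixed point of $P$ is a thrackle of size $n-1$, while for certain configurations (e.g., $P$ in convex position with $n$ odd) one can build a thrackle on $n$ edges, so the two-valued nature of the bound is sharp.

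There is essentially no obstacle in this argument, since all the substantive work has already been done in Proposition~\ref{p:max-track}; the only care needed is to spell out the elementary (but definitional) equivalence between adjacency in $D(P)$ and disjointness of segments, in order to see that the ``independent set'' and ``thrackle'' optimization problems are the same problem in different language.
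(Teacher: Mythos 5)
Your argument is correct and is exactly the paper's: the corollary is obtained by combining Observation~\ref{o:thrack} (independent sets of $D(P)$ are precisely thrackles of $\pp$) with Proposition~\ref{p:max-track} (Kupitz's bound on the maximum thrackle size). The extra remark about both values being attained is fine but not needed for the statement as written.
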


Recently Lea\~nos, Ndjatchi, and R\'ios-Castro~\cite{us1} have established the following lower bound for the
connec\-tivity number $\kappa(D(P))$ of $D(P)$.

\begin{thm}[Theorem~1~\cite{us1}]\label{t:upconec}
If $P$ is any set of $n\geq 3$ points in general position in the plane, then
$$\kappa(D(P))\geq \binom{\lfloor\frac{n-2}{2}\rfloor}{2}+\binom{\lceil\frac{n-2}{2}\rceil}{2}.$$
\end{thm}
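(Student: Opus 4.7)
The plan is to match the connectivity lower bound to the minimum degree of $D(P)$, which turns out to equal the right-hand side of the theorem. Set $\delta:=\binom{\lfloor(n-2)/2\rfloor}{2}+\binom{\lceil(n-2)/2\rceil}{2}$. First, I would verify that $\delta=\delta(D(P))$. Fix $e=xy\in\pp$ and let $a,b$ count the points of $P\setminus\{x,y\}$ lying strictly on each side of the line through $x$ and $y$, so $a+b=n-2$. Because $P$ is in general position, a segment $zw\in\pp\setminus\{e\}$ is disjoint from $e$ if and only if $\{z,w\}\cap\{x,y\}=\emptyset$ and $z,w$ lie on the same side of line $xy$; hence $\deg_{D(P)}(e)=\binom{n-2}{2}-ab=\binom{a}{2}+\binom{b}{2}$. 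A convexity argument shows that this is minimized at $\{a,b\}=\{\lfloor(n-2)/2\rfloor,\lceil(n-2)/2\rceil\}$, realized by any balanced halving chord of $P$, yielding $\delta(D(P))=\delta$. Since $\kappa(G)\leq\delta(G)$ holds in general, the theorem asserts the matching lower bound.

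To prove $\kappa(D(P))\geq\delta$, I would invoke Menger's theorem: it suffices to exhibit, for every pair of distinct vertices $e,e'\in\pp$, a family of $\delta$ internally vertex-disjoint paths joining them in $D(P)$. I would split into cases according to the geometric interaction between $e=xy$ and $e'=x'y'$: they may be disjoint (and hence already adjacent in $D(P)$), share exactly one endpoint, or properly cross. In each case, partition $P\setminus(\{x,y\}\cup\{x',y'\})$ using the lines carrying $e$ and $e'$ into at most four open regions, and identify the common neighbors $N(e)\cap N(e')$ as precisely those segments whose endpoints lie in compatible regions. Most of the $\delta$ paths are then realized as length-$2$ paths through these common neighbors; the deficit is made up by length-$3$ paths routed through a ``reservoir'' segment disjoint from exactly one of $e,e'$ combined with an auxiliary intermediate vertex adjacent to the other.

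The technical core, and the main obstacle, is packaging these length-$3$ detours into a genuinely internally vertex-disjoint family. Formally this amounts to solving a Hall-type matching problem on the bipartite graph between the ``private'' neighbors $N(e)\setminus N[e']$ and $N(e')\setminus N[e]$, where the edges record the existence of a valid bypass that has not already been consumed by a length-$2$ path. The subcase demanding the most care is when both $e$ and $e'$ are balanced halving chords (so each has degree exactly $\delta$, leaving no slack), and in particular when they also share an endpoint or cross; there the common neighborhood $N(e)\cap N(e')$ shrinks to its smallest value and verifying Hall's condition requires a dedicated geometric lemma controlling how halving chords can intersect and share endpoints. Establishing such a lemma, so as to rule out small bipartite ``bottlenecks'' in the private-neighbor bipartite graph, is, I expect, the genuine new ingredient used in \cite{us1}.
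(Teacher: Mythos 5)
First, a point of order: this statement is imported verbatim from \cite{us1} and the present paper gives no proof of it, so there is no internal argument to compare yours against; I can only assess the proposal on its own terms. As it stands it is not a proof, for two reasons. The first is a genuine error in your opening computation: it is false that $zw$ is disjoint from $e=xy$ \emph{only if} $z$ and $w$ lie on the same side of the line through $x$ and $y$. A segment whose endpoints straddle that line meets it in a single point, which need not lie on the segment $xy$ itself (take $x=(0,0)$, $y=(1,0)$, $z=(10,1)$, $w=(10,-1)$). Hence $\deg_{D(P)}(e)=\binom{a}{2}+\binom{b}{2}+c$, where $c\geq 0$ counts the straddling segments that miss $e$, and the correct conclusion is only $\delta(D(P))\geq\binom{\lfloor (n-2)/2\rfloor}{2}+\binom{\lceil (n-2)/2\rceil}{2}$, not equality; a balanced halving chord need not have degree exactly $\delta$. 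Indeed equality fails for some point sets: by the upper bound quoted in the introduction from \cite{us2}, $\kappa(D(P))$ --- and therefore $\delta(D(P))\geq\kappa(D(P))$ --- can be as large as $7n^2/18+\Theta(n)$, which exceeds the right-hand side of the theorem ($\sim n^2/4$). This does not hurt the lower bound you are after, but it shows that the framing ``match $\kappa(D(P))$ to $\delta(D(P))$'' is not available.

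The more serious problem is that the entire content of the theorem is concentrated in the step you defer. Producing $\delta$ internally disjoint $e$--$e'$ paths via length-$2$ paths through $N(e)\cap N(e')$ plus length-$3$ detours is indeed the natural Menger-type plan, but the Hall-type matching between $N(e)\setminus N[e']$ and $N(e')\setminus N[e]$, together with the ``dedicated geometric lemma'' you invoke for the tight case of two crossing or endpoint-sharing balanced chords, is precisely where all the work of \cite{us1} lives, and you give no argument for it. In particular you never verify that the number of common neighbours plus the number of realizable pairwise disjoint detours actually reaches $\delta$ in the worst case (e.g.\ two crossing halving chords, where $N(e)\cap N(e')$ can be very small and neither private neighbourhood has any slack), nor that the detours can avoid the vertices already used as midpoints of length-$2$ paths. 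Without that, the proposal is a statement of a plausible proof \emph{strategy} rather than a proof.
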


A classical result of Chv\'atal and Erd\H{o}s in the field of Hamilton cycles is the following.

\begin{thm}[Theorem~1~\cite{erdos}]\label{t:upbounindep}
If $\alpha(G)\leq \kappa(G)$ for a graph $G$, then $G$ is hamiltonian.
\end{thm}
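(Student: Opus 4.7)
The plan is to establish the Chv\'atal--Erd\H{o}s theorem by contradiction, combining a longest-cycle argument with the fan version of Menger's theorem. Set $k := \kappa(G)$ and assume $G$ is not hamiltonian. Since $\alpha(G) = 1$ would make $G$ a complete graph (hamiltonian as soon as $|V(G)| \geq 3$), we may assume $\alpha(G) \geq 2$, so $k \geq 2$ and $G$ contains a cycle. Let $C$ be a longest cycle of $G$ and pick $v \in V(G) \setminus V(C)$, which exists because $G$ is not hamiltonian. The whole argument will produce an independent set of size $k+1$, contradicting $\alpha(G) \leq k$.

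The preliminary step is to verify $|V(C)| \geq k$. If instead $|V(C)| = m < k$, Menger's theorem yields $m$ internally disjoint paths from $v$ to all $m$ vertices of $C$; choosing two paths that end at consecutive vertices $u, u^+$ of $C$ and replacing the edge $uu^+$ by the concatenation of those two paths through $v$ produces a cycle of length at least $m+1$, contradicting the maximality of $C$. With $|V(C)| \geq k$ in hand, the fan form of Menger's theorem provides $k$ internally disjoint paths $P_1,\dots,P_k$ from $v$ to distinct vertices $u_1,\dots,u_k$ of $C$ whose internal vertices all lie outside $V(C)$.

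Now fix an orientation of $C$, denote by $u_i^+$ the cyclic successor of $u_i$ on $C$, and consider $S := \{v,u_1^+,\ldots,u_k^+\}$. Since $u_i^+ \in V(C)$, $v \notin V(C)$, and distinct $u_i$ give distinct $u_i^+$, we have $|S| = k+1$. The crux is to prove $S$ is independent. First, $vu_i^+ \notin E(G)$: otherwise the cycle that follows $P_i$ from $v$ to $u_i$, traverses the long arc of $C$ from $u_i$ to $u_i^+$ (avoiding the edge $u_iu_i^+$), and closes with $vu_i^+$, has length $\mathrm{len}(P_i) + |V(C)| \geq |V(C)| + 1$, violating maximality. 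Second, $u_i^+u_j^+ \notin E(G)$ for $i \neq j$: assuming such an edge exists, build a cycle by traversing $P_i$ from $v$ to $u_i$, then the backward arc of $C$ from $u_i$ to $u_j^+$, then the edge $u_i^+u_j^+$, then the forward arc of $C$ from $u_i^+$ to $u_j$, and finally $P_j$ back to $v$. An index check shows that the two $C$-arcs partition $V(C)$, so the result is a simple cycle of length $\mathrm{len}(P_i) + \mathrm{len}(P_j) + |V(C)| - 1 > |V(C)|$, again contradicting maximality.

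The main obstacle I anticipate is the combinatorial bookkeeping for the second sub-claim: one must carefully check that the auxiliary cycle it constructs is genuinely simple---specifically, that the forward and backward $C$-arcs are vertex-disjoint and together exhaust $V(C)$---and that the degenerate configurations $u_j = u_i^+$ or $u_j^+ = u_i$ (in which one of the two arcs collapses to a single vertex) are handled correctly; in each such case the length inequality persists. Once the $(k+1)$-element independent set $S$ is secured, we obtain $\alpha(G) \geq k+1 > k = \kappa(G)$, contradicting the hypothesis $\alpha(G) \leq \kappa(G)$; hence $G$ must have been hamiltonian to begin with.
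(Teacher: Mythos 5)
This statement is not proved in the paper at all: it is the classical Chv\'atal--Erd\H{o}s theorem, quoted verbatim as Theorem~1 of \cite{erdos} and used as a black box (together with Theorem~\ref{t:upconec} and Corollary~\ref{p:independence}) to dispose of the case $n\geq 9$. So there is no internal proof to compare against; your argument is essentially the original Chv\'atal--Erd\H{o}s proof, and it is correct. The structure is the standard one: reduce to $\kappa(G)=k\geq 2$, take a longest cycle $C$ and a vertex $v$ off it, show $|V(C)|\geq k$ (else the fan to all of $V(C)$ detours the cycle through $v$ and lengthens it), apply the fan lemma to get $k$ internally disjoint $v$--$C$ paths ending at distinct $u_1,\dots,u_k$, and verify that $\{v,u_1^+,\dots,u_k^+\}$ is an independent set of size $k+1$. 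Your two length computations ($\mathrm{len}(P_i)+|V(C)|$ for an edge $vu_i^+$, and $\mathrm{len}(P_i)+\mathrm{len}(P_j)+|V(C)|-1$ for an edge $u_i^+u_j^+$, with the two $C$-arcs partitioning $V(C)$ even when one collapses to a single vertex) are right. The only caveat, which is really a defect of the statement as transcribed in the paper rather than of your proof, is that the implication fails for the trivial graphs $K_1$ and $K_2$; your parenthetical ``hamiltonian as soon as $|V(G)|\geq 3$'' tacitly supplies the missing hypothesis $|V(G)|\geq 3$ from the original formulation.
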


We note that if $n\geq 9$, then Theorems~\ref{t:upconec},~\ref{t:upbounindep} and Corollary~\ref{p:independence} imply that $D(P)$ is hamiltonian.
So we have proved the following.

\begin{lem}\label{l:nine}
If $n\geq 9$, then $D(P)$ is hamiltonian.
\end{lem}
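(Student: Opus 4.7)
The plan is to invoke the Chv\'atal--Erd\H{o}s theorem (Theorem~\ref{t:upbounindep}), which guarantees hamiltonicity of a graph $G$ as soon as $\alpha(G)\le \kappa(G)$. Both sides of this inequality have already been controlled for $D(P)$ in the preceding material: Corollary~\ref{p:independence} gives the upper bound $\alpha(D(P))\le n$, while Theorem~\ref{t:upconec} gives the connectivity lower bound
$$\kappa(D(P))\ge \binom{\lfloor(n-2)/2\rfloor}{2}+\binom{\lceil(n-2)/2\rceil}{2}.$$
So the entire task reduces to a routine numerical check: verify that for every integer $n\ge 9$,
$$\binom{\lfloor(n-2)/2\rfloor}{2}+\binom{\lceil(n-2)/2\rceil}{2}\ \ge\ n.$$

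To dispatch this inequality I would split by the parity of $n$. If $n=2k$ is even, both floor and ceiling equal $k-1$, so the left-hand side is $2\binom{k-1}{2}=(k-1)(k-2)$, and the inequality $(k-1)(k-2)\ge 2k$ simplifies to $k^2-5k+2\ge 0$, which holds for all $k\ge 5$, i.e.\ all even $n\ge 10$. If $n=2k+1$ is odd, the floor and ceiling are $k-1$ and $k$, so the left-hand side is $\binom{k-1}{2}+\binom{k}{2}=(k-1)^2$, and $(k-1)^2\ge 2k+1$ reduces to $k(k-4)\ge 0$, which holds for all $k\ge 4$, i.e.\ all odd $n\ge 9$. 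Combined, the two cases cover precisely $n\ge 9$, and one sees that the threshold $n=9$ is tight for this particular argument since $\binom{3}{2}+\binom{4}{2}=9$ meets $n=9$ with equality.

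With the arithmetic in hand, Chv\'atal--Erd\H{o}s applies verbatim to $G=D(P)$ and yields that $D(P)$ is hamiltonian whenever $n\ge 9$. There is no genuine obstacle in this step; the entire conceptual content has been outsourced to Theorems~\ref{t:upconec} and~\ref{t:upbounindep}, and what remains is purely an elementary comparison of a quadratic bound with a linear one. The smaller cases $5\le n\le 8$ fall outside the reach of this counting strategy (the Chv\'atal--Erd\H{o}s bound simply fails there) and must therefore be handled by a different argument in a subsequent subsection of the paper.
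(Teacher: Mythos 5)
Your proof is correct and follows exactly the paper's argument: combine Corollary~\ref{p:independence}, Theorem~\ref{t:upconec}, and the Chv\'atal--Erd\H{o}s theorem, then check that the quadratic connectivity bound dominates $n$ for $n\geq 9$. The paper leaves the arithmetic implicit, whereas you carry it out explicitly (and correctly, including the tightness at $n=9$), so there is nothing to add.
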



\subsection{The instances of $P$ for which $D(P)$ is not hamiltonian}\label{subsec:negative}
\vskip 0.2cm

Our aim in this section is to show the following lemma.

\begin{lem}\label{l:negative}
$D(P)$ is not hamiltonian if at least one of the following two conditions holds:
\begin{itemize}
\item[(C1)] $n\in \{2,3,4,5\}$;
\item[(C2)] $n=6$ and $P$ is in convex position.
\end{itemize}
\end{lem}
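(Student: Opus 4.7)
The plan is to split the argument according to $n$ and the convex-position structure of $P$. The case $n \in \{2,3,4\}$ is immediate from Proposition~\ref{p:ge5}: for $n=4$ the graph $D(P)$ is disconnected, while for $n \leq 3$ it has at most three vertices and no edges (any two segments share an endpoint), so no hamilton cycle can exist. The substance of the proof therefore lies in treating the three order types for $n=5$ enumerated in Table~\ref{table1}, together with the convex hexagon.

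For $n=5$ I aim to exhibit, in two of the three order types, a vertex of degree $1$ in $D(P)$ (which blocks any spanning cycle), and to use a more delicate forcing argument in the third. If $P$ is a convex pentagon, the alternation criterion for crossings in convex position shows that each of the five diagonals is crossed by two of its three non-incident segments, so each diagonal has degree $1$. If $P$ has four convex-hull vertices $a,b,c,d$ and one interior point $e$, the diagonal $ac$ of the outer quadrilateral has degree $1$: its three non-sharing partners are $bd, be, de$, with $bd$ always crossing $ac$ (as diagonals of a convex quadrilateral) and exactly one of $\{be, de\}$ crossing $ac$ depending on whether $e$ lies in $\triangle abc$ or $\triangle acd$. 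The remaining sub-case of $n=5$, with outer triangle $ABC$ containing interior points $d,e$, needs more care. Since $d,e$ are interior to $\triangle ABC$, the line through them meets two of its sides and thus separates exactly one outer vertex from the other two; call it $A$. A half-plane argument together with the fact that $B,C$ are extreme points of $\overline P$ shows that the $4$-subsets $\{A,B,C,d\}$, $\{A,B,C,e\}$, $\{A,B,d,e\}$ and $\{A,C,d,e\}$ are all non-convex, while $\{B,C,d,e\}$ is in convex position with hull order placing $B,C$ consecutively; hence precisely one of the disjoint pairs $\{Be,Cd\}$ and $\{Bd,Ce\}$ crosses, and it is the unique crossing of the whole rectilinear drawing. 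Taking this pair to be $\{Be,Cd\}$ (possibly after swapping $d$ and $e$), the segments $Be$ and $Cd$ are the only degree-$2$ vertices of $D(P)$; all others have degree~$3$. The four cycle-edges incident to $Be, Cd$ in any hamilton cycle are then forced, and the remaining degree constraints leave exactly two feasible completions of the cycle-edge set, each of which decomposes into two disjoint $5$-cycles rather than a spanning $10$-cycle.

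For $n=6$ with $P$ in convex position the same degree-$2$ forcing strategy applies, with richer combinatorics. Label the hexagon $v_1,\ldots,v_6$ cyclically. The alternation criterion gives that each of the three long diagonals $v_1v_4, v_2v_5, v_3v_6$ has degree exactly $2$ in $D(P)$; the resulting six forced cycle-edges arrange themselves as three disjoint $2$-paths whose endpoints are the six sides of the hexagon. A short counting argument on the free cycle-degrees at the remaining twelve vertices (sides and short diagonals) shows that (i) the three short-short disjoint pairs $\{v_1v_3, v_4v_6\}$, $\{v_2v_4, v_5v_1\}$, $\{v_3v_5, v_6v_2\}$ must all appear in the cycle, and (ii) each side must be joined in the cycle to one of the two short diagonals it is adjacent to in $D(P)$. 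The bipartite admissibility graph of such side/short-diagonal pairs turns out to be a single $12$-cycle, and so supports only two perfect matchings; I would then check that each of these, combined with the forced edges, produces three disjoint $5$-cycles rather than a single spanning cycle. The main technical obstacle is thus the two ``degree-$3$'' sub-cases--the $n=5$ triangular configuration and the convex hexagon--in which no degree-$1$ vertex is available; in both one must first identify the forced structure imposed by the degree-$2$ vertices, reduce the completion problem to a constrained matching problem, and finally verify that every solution of that matching problem splits into strictly smaller cycles rather than a single hamilton cycle.
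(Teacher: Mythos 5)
Your proposal is correct, but it reaches the conclusion by a genuinely different route in the $n=5$ case and by a partially different one for the hexagon. For $n=5$ the paper does not split by order type at all: it picks a convex quadrilateral $Q\subseteq P$ (which exists in every order type), observes that both diagonals of $Q$ can only be adjacent to segments through the fifth point, and runs a single forcing argument that covers all three configurations uniformly. You instead treat the three order types separately, killing the convex pentagon and the $4{+}1$ configuration outright with degree-$1$ vertices (both degree computations are correct), and reserving a forcing/2-factor argument for the $3{+}2$ configuration. Your route is arguably more illuminating about \emph{why} each configuration fails, at the price of three sub-cases instead of one; the paper's is shorter and order-type-free. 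For the convex hexagon both arguments start identically (the three long diagonals have degree $2$ and force three $2$-paths), but then diverge: the paper forces the three edges inside $L$ via an $8$-cycle obstruction and finishes by propagating constraints along the twelve $L$--$M$ edges, whereas you force the same three edges by a degree count (which does work: writing $a$ for the number of short--short cycle edges and $b$ for short--side ones, $2a+b=12$ and $b\le 6$ give $a=3$, $b=6$), reduce the completion to a perfect matching in a $12$-cycle, and check that each of the two matchings yields three disjoint $5$-cycles. I verified that this is what happens, and likewise that in your $3{+}2$ sub-case the forced edges at the unique crossing pair admit exactly two $2$-regular completions, each a pair of $5$-cycles. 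So every assertion you make is true; the only caveat is that your text defers several of these verifications (``a short counting argument \ldots shows (i) and (ii)'', ``I would then check'', ``leave exactly two feasible completions''), so a final write-up must actually carry out the matching enumeration for the hexagon and the case analysis for the $3{+}2$ configuration -- neither is long, but neither is yet on the page.
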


\begin{proof} For brevity, let $G:=D(P)$. Suppose that $(C1)$ holds. From Proposition~\ref{p:ge5} we know that $G$ is not connected (and hence not hamiltonian) for $n\in \{2,3,4\}$.
Then we may assume that $n=5$. It is folklore that $P$ contains a subset $Q$ with $4$ points in convex position. Let $p_1, p_2, p_3, p_4$ be the points of $Q$, and assume that they appear in $CH(Q)$ in this cyclic order. For $i\in \{1,2\}$, let $d_i:=p_{i}p_{i+2}$. Then, $d_i$ intersects all segments of $\qq$, and so each neighbor of $d_i$ in $G$ is incident with the only point $p_5$ belonging to $P\setminus Q$. We note that regardless of whether $p_5$ lies inside of $CH(Q)$ or not, we have that
$N_G(d_1)\subseteq \{p_2p_5, p_4p_5\}$ and $N_G(d_2)\subseteq \{p_1p_5, p_3p_5\}$, where $N_G(d_i)$ denotes the set of all vertices of $G$ that are adjacent to $d_i$.

Seeking a contradiction, suppose that $G$ contains a spanning cycle $C$. From the existence of $C$ it follows that $N_G(d_1)=\{p_2p_5, p_4p_5\}$ and
$N_G(d_2)=\{p_1p_5, p_3p_5\}$, and so $S_1:=p_2p_5,d_1, p_4p_5$ and $S_2:=p_1p_5, d_2, p_3p_5$ must be subpaths of $C$. Since $d_1\cap d_2 \neq \emptyset$, then $S_1$ and $S_2$ are disjoint.
Let $\pp':=V(C)\setminus V(S_1\cup S_2)$. Then, $C\setminus \{d_1,d_2\}$ consists of two disjoint subpaths, say $S_3$ and $S_4$. Then $C=S_1\cup S_3 \cup S_2 \cup S_4$ and
moreover, the union of the inner vertices of $S_3$ and $S_4$ is precisely $\{p_1p_2, p_2p_3, p_3p_4, p_4p_1\} =\pp'$.

From the definitions of $S_1$ and $S_2$ it is easy to see that no endvertex of $S_1$ is adjacent in $G$ to any endvertex of $S_2$. Then, for $j\in \{3,4\}$,  $S_j$ must have at least one inner vertex. Let $S^{\circ}_j$ be the subpath of $S_j$ induced by its inner vertices. Since the subgraph of $G$ induced by any subset of $\pp'$ with at least 3 elements is disconnected, then $S^{\circ}_j$ must have exactly 2 vertices.  So, $S^{\circ}_3$ and $S^{\circ}_4$ are edges of $C$, and consequently, each of $S_3$ and $S_4$ is a path with $4$ vertices. On the other hand, since the only neighbour of
$p_1p_2$ (respectively, $p_2p_3$) in $\pp'$ is $p_3p_4$ (respectively, $p_1p_4$), then we can conclude that $\{S^{\circ}_3;S^{\circ}_4\}=\{p_1p_2,p_3p_4; p_2p_3,p_1p_4\}$.

Without loss of generality suppose $S^{\circ}_3=p_1p_2,p_3p_4$ and $S^{\circ}_4=p_2p_3,p_1p_4$. Then, $S^{\circ}_3$ can be extended to $S_3$ only in two ways:
 $S_3=p_4p_5,p_1p_2,p_3p_4,p_1p_5$ or $S_3=p_3p_5,p_1p_2,p_3p_4,p_2p_5$. The first (respectively, last) case implies that
 $p_2p_5$ and $p_3p_5$ (respectively, $p_1p_5$ and $p_4p_5$) are vertices of $S_4$. Since it is impossible to extend $S^{\circ}_4$ to a path (namely, $S_4$) with any of these pairs, we conclude that such a $C$ does not exist.

Suppose now that ($C2$) holds. Let $p_1, p_2,\ldots ,p_6$ be the points of $P$ and assume
w.l.o.g. that they are located as in Figure~\ref{fig:Ejem}~(a)-(b). Again, we derive a contradiction from the assumption that $G$ contains a spanning cycle $C$.
Let $\{U, M, L\}$ be the vertex partition of $V(G)$ defined by $U:=\{p_1p_4, p_2p_5, p_3p_6\}$, $M:=\{p_1p_2, p_1p_6, p_2p_3,\-p_3p_4, p_4p_5, p_5p_6\}$, and
$L:=\{p_1p_3, p_1p_5, p_2p_4, p_2p_6, p_3p_5,p_4p_6\}$. We note that the degree of a vertex in $U, M,$ and $L$ is $2,6,$ and $3$, respectively. See Figure~\ref{fig:Ejem}~(c).
The vertices of degree 2 imply that $S_1:=p_5p_6,p_1p_4,p_2p_3$,  $S_2:=p_1p_6,p_2p_5,p_3p_4$, and $S_3:=p_1p_2,p_3p_6,p_4p_5$ must be subpaths of $C$.

Let $e_{1,2}, e_{1,3}$, and $e_{2,3}$ be the edges of $G$ whose endvertices are indicated by Figure~\ref{fig:Ejem}~(c). For $i,j\in \{1,2,3\}$ and $i<j$, we let
$E_{i,j}$ be the set formed by the 4 edges of $G$ that go from an endvertex of $e_{i,j}$ to $M$. Note that if $e_{i,j}\notin C$ for some of these edges, then the existence of
$C$ and the fact each endvertex of $e_{i,j}$ has degree 3 in $G$ imply that each edge of $E_{i,j}$ belongs to $C$. On the other hand, a simple inspection of
Figure~\ref{fig:Ejem}~(c) reveals that $C_{i,j}:=E_{i,j}\cup S_i\cup S_j$ is a cycle of order $8$. Since $C$ cannot have proper cycles, we conclude
that each $e_{i,j}$ must be in $C$. Then $(i)$ {\em each vertex in $L$ is incident with exactly one edge of $C$ that goes to $M$}. Since $S_1, S_2,$ and
$S_3$ are subpaths of $C$, $(ii)$ {\em the edges of $C$ that go from $L$ to $M$ define a matching of size 6}.

Let $C':=S_1\cup S_2 \cup S_3 \cup \{e_{1,2}, e_{1,3}, e_{2,3}\}$. We know that $C'\subsetneq C$. Let $E(L,M)$ be the set of edges of $G$ that go from $L$ to $M$, and let us label these by $e_1, e_2,\ldots ,e_{12}$, as indicated by Figure~\ref{fig:Ejem}~(c). A pair $\{e_i,e_j\}\subset E(L,M)$ will be called {\em mutually exclusive}
(or {\em m.e.} for short), if the subgraph of $G$ induced by $C'\cup \{e_i,e_j\}$ has a cycle. Because $C$ cannot have proper cycles, $C\cap E(L,M)$ cannot contain an m.e. pair.

From $(i)$ we know that exactly one of $e_1\in C$ or $e_3\in C$ holds. Suppose first that $e_1\in C$ holds. Since $\{e_1,e_4\}$ is m.e., then $e_6\in C$ by $(i)$. This last and
$(ii)$ imply that $e_{10}\notin C$, and so $e_{12}\in C$ by $(i)$. From $e_{12}\in C$ and $(ii)$ it follows that $e_{11}\notin C$, and so $e_2\in C$ by $(i)$. As $e_1, e_2\in C$ contradicts $(ii)$, we conclude that $e_1\in C$ is impossible.  Suppose now that $e_3\in C$. Then $e_1\notin C$ by $(i)$, and so $e_2\in C$ by $(ii)$. This last and
$(ii)$ imply that $e_{11}\notin C$, and so $e_{12}\in C$ by $(i)$. From $e_{12}\in C$ and $(ii)$ it follows that $e_{10}\notin C$, and so $e_6\in C$ by $(i)$. Then,
$\{e_3, e_6\}$ is an m.e. pair of $C$, which is impossible. This last contradiction proves if ($C2$) holds, then $G$ is not hamiltonian.
\end{proof}



\subsection{$D(P)$ is hamiltonian for the rest of the instances of $P$}\label{s:computer} Let us denote by $C_6$ the order type corresponding to the set of
$6$ points in convex position, and let
 ${\mathbb S}$ be the collection of all order types such that $P\in {\mathbb S}$ if and only if $|P|\in \{6,7,8\}$ and $P\not\sim C_6$. From Table~\ref{table1} we know that
${\mathbb S}$ has exactly 3465 sets of points.  Then, it remains to check that $D(P)$ is hamiltonian for each $P\in {\mathbb S}$.

Motivated by the short proof of Theorem~\ref{thm:main} for $n\geq 9$, we were first looking for known results in the Hamilton cycles literature that help to settle the current case in a simple form.
After spent some time in an unsuccessful search, we found a very promising hamiltonicity criterion in~\cite{plotnikov}. Unfortunately, as we shall see in more detail in Section~\ref{s:plotnikov}, such a criterion fails for certain minor of $D(C_6)$. In a second effort, we worked in a constructive approach, and  we soon discovered an easy way to
construct explicit hamiltonian cycles of $D(P)$  for the case in which $P$ is in convex position and $n\geq 7$, however, we were unable to extend that technique to the general case.

Finally, we find the complete and reliable database for all possible order types of size at most 10 provided by
O. Aichholzer et al. in ~\cite{oswin1}. Each of these order types is stored and available in~\cite{oswin2} as a set of points with explicit positive integers coordinates. We first implement
an algorithm in Mathematica (Wolfram Language) whose input is the set of points defining $P\in {\mathbb S}$
and produces the corresponding graph $D(P)$, as output. Then, for each $P\in {\mathbb S}$ we verified the hamiltonicity of $D(P)$ with the help of the function that Mathematica has available for checking the hamiltonicity of small graphs \cite{wolfram}. According with the results obtained from that exhaustive verification we have the following lemma.

\begin{lem}\label{l:678}
If ${\mathbb S}$ is as above, then $D(P)$ is hamiltonian for $P\in {\mathbb S}$.
\end{lem}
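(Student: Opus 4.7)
The plan is unapologetically computational. Table~\ref{table1} gives $|{\mathbb S}| = 16+135+3315-1 = 3465$, so a case-by-case hand analysis in the spirit of Lemma~\ref{l:negative} is clearly impractical, yet the number is comfortably within computational reach. I would lean on the public database of order types of Aichholzer et al.\ \cite{oswin2}, which supplies explicit integer coordinates for one representative of every order type with $n\leq 10$; since Section~\ref{sec:main} has already observed that order-type isomorphism implies isomorphism of disjointness graphs, one representative per class suffices.

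The procedure breaks into three routine steps. First, for each $n\in\{6,7,8\}$, read the database and discard the single representative of $C_6$. Second, for each remaining point set $P$ construct $D(P)$ by iterating over unordered pairs of segments $(xy,zw)$: if the pair shares an endpoint declare them non-adjacent; otherwise evaluate the four orientation determinants $O(xyz),O(xyw),O(zwx),O(zwy)$ already introduced in the paper and declare the segments adjacent (i.e.\ disjoint) exactly when $O(xyz)\cdot O(xyw)>0$ or $O(zwx)\cdot O(zwy)>0$. Because $P$ is in general position every determinant is nonzero, so the predicate is unambiguous, and exact integer arithmetic on the integer coordinates of~\cite{oswin2} sidesteps any floating-point concern. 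Third, apply a Hamilton-cycle search (for instance \texttt{FindHamiltonianCycle} in Mathematica, or any textbook backtracking routine) to the graph so produced; with at most $\binom{8}{2}=28$ vertices and high edge density once $P$ is reasonably ``spread out,'' each instance terminates in negligible time.

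The conceptual difficulty is essentially zero; the only genuine risk is an implementation bug. I would guard against this with two independent checks. The first is a calibration on known outputs: the code must return ``not hamiltonian'' on the convex $6$-gon and on the five-point instances of Lemma~\ref{l:negative}, and must produce a correct spanning cycle on a handful of hand-verified examples (for instance a convex $7$-gon, where an explicit hamiltonian cycle of $D(P)$ is easy to write down directly). The second safeguard is that whenever the search claims a Hamilton cycle, the cycle itself can be printed and its edges verified one by one against the edge list of $D(P)$, thereby reducing the trust placed in the search routine to trust in a trivial membership test. With these two validations in place, exhaustive verification across the $3465$ order types in ${\mathbb S}$ yields Lemma~\ref{l:678}.
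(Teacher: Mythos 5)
Your proposal is correct and follows essentially the same route as the paper: the authors likewise exclude $C_6$ from the $3465$ order types of size $6$, $7$, $8$ in the Aichholzer et al.\ database, build $D(P)$ from the integer coordinates, and verify hamiltonicity of each instance with Mathematica's built-in Hamilton-cycle test. Your added safeguards (exact orientation-based disjointness predicate, calibration against the known non-hamiltonian instances, and independent verification of each returned cycle) are sensible refinements but do not change the argument.
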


\noindent{\em Proof of Theorem~\ref{thm:main}}. It follows immediately from Lemmas~\ref{l:nine},~\ref{l:negative}, and~\ref{l:678}. \hfill $\square$


\section{A counterexample to the hamiltonicity criterion claimed in~\cite{plotnikov}}\label{s:plotnikov}
Let us recall the concepts used in~\cite{plotnikov}. Let $H=(V(H), E(H))$ be a connected graph. If $X\neq \emptyset$ is an independent set of $H$,
then $\mathbf{P}(X)$ will be the set of all paths with both endvertices in $X$. A vertex subset $U$ of $\mathbf{P}(X)\setminus X$ is a {\em separator}
of $X$, if there are no two vertices of $X$ in a component of $H\setminus U$. Then any separator of  $X$ is a vertex cut of $H$.

\begin{stat}[Theorem 3.1~\cite{plotnikov}]\label{th:wrong}
The connected graph $H=(V(H), E(H))$ is Hamiltonian iff for any independent set $X\subset V(H)$, the following holds:
$|X|\leq \min\{|U|:~U \mbox{ is a separator of } X\}.$
\end{stat}

\begin{thm}\label{thm2}
The graph $H$ shown in Figure \ref{fig:grapg_G}  is a counterexample to Statement~\ref{th:wrong}.
\end{thm}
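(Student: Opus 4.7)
The plan is to read the graph $H$ off Figure~\ref{fig:grapg_G}, show that $H$ is not Hamiltonian, and then verify that the hypothesis of Statement~\ref{th:wrong} is satisfied for every independent set $X\subseteq V(H)$. Together these two steps refute the ``if'' direction of Statement~\ref{th:wrong}.

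First I would fix notation for the vertices and edges of $H$ as displayed in the figure, and confirm that $H$ is connected. In light of the remark preceding the theorem (namely that Plotnikov's criterion fails for a certain minor of $D(C_6)$), I expect $H$ to be small and to carry part of the hexagonal symmetry of $D(C_6)$, which I would exploit heavily in the casework.

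Next I would prove non-hamiltonicity of $H$ along the lines of the proof of Lemma~\ref{l:negative}(C2). Locate all vertices of $H$ of degree $2$; the two edges incident to any such vertex are forced in every Hamilton cycle $C$. Then trace the resulting forced subpaths and derive one of the three standard contradictions: a short proper cycle is created, a vertex is forced to host more than two edges of $C$, or every admissible extension runs out. Before committing to this bookkeeping I would first check the cheaper option that $H$ is bipartite with unequal colour classes, in which case non-hamiltonicity is immediate from a parity count.

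The core step is verifying Plotnikov's condition, i.e.\ that for every independent set $X$ of $H$ and every separator $U\subseteq \mathbf{P}(X)\setminus X$ of $X$ one has $|U|\geq |X|$. Since the inequality only becomes easier when $X$ shrinks, it is enough to treat maximal independent sets, and by symmetry it suffices to handle one representative per orbit of $\mathrm{Aut}(H)$. For each representative $X$ I would exhibit, by explicit construction, a family of internally disjoint paths in $H\setminus X$ connecting the vertices of $X$ pairwise in such a way that no removal of fewer than $|X|$ vertices from $\mathbf{P}(X)\setminus X$ can disconnect them; this is a Menger-style lower bound, which in a graph of this size amounts to writing paths down by hand.

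The main obstacle is precisely this last verification. By Chv\'atal--Erd\H{o}s (Theorem~\ref{t:upbounindep}), non-hamiltonicity of $H$ forces $\kappa(H)<\alpha(H)$, so the condition \emph{cannot} be dismissed by a single global connectivity bound, and the fact that a separator of $X$ is required to lie in $\mathbf{P}(X)\setminus X$ rather than in all of $V(H)\setminus X$ is exactly what allows Plotnikov's inequality to survive. Keeping this restriction visible throughout the enumeration, and using the symmetries inherited from $D(C_6)$ to collapse the number of orbits of maximal independent sets to a handful, is what will make the argument fit in a reasonable amount of space.
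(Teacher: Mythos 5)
Your outline shares the paper's two-step skeleton (show $H$ is non-hamiltonian; verify Plotnikov's condition for every independent set), but the way you verify the condition is genuinely different from, and heavier than, what the paper does. The paper's entire verification is: $H$ is $2$-connected, so the condition is automatic for $|X|\le 2$; $\alpha(H)=3$, so the only remaining case is $|X|=3$; and a separator $U$ of a $3$-element set with $|U|=2$ would have to leave the three vertices of $X$ in three distinct components of $H\setminus U$, i.e.\ $H$ would need a $2$-cut whose removal yields at least three components --- which a single inspection of $H$ rules out. This replaces your per-orbit enumeration of maximal independent sets and hand-built Menger-style path certificates with one global check on $2$-cuts, independent of which $X$ is being considered. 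Your plan would also work (e.g.\ a cycle of $H\setminus U$-avoiding paths through the three vertices of $X$ certifies that no $2$-set separates them pairwise), but it is considerably more bookkeeping for the same conclusion.

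Two points in your reduction deserve care. First, your claim that ``the inequality only becomes easier when $X$ shrinks, so it is enough to treat maximal independent sets'' is not justified as stated: when $X$ shrinks, the left-hand side $|X|$ decreases, but so may the minimum separator size, since a separator of a smaller $X$ has fewer pairs to separate and is drawn from a smaller ground set $\mathbf{P}(X)\setminus X$. The reduction to $|X|=3$ is valid here, but for the reasons the paper gives ($2$-connectedness handles $|X|\le 2$ because every separator is a vertex cut, and $\alpha(H)=3$ handles the rest), not by monotonicity. Second, your remark that Chv\'atal--Erd\H{o}s forces $\kappa(H)<\alpha(H)$ is correct and is a useful sanity check (indeed $\kappa(H)=2<3=\alpha(H)$), and you are right that the restriction of separators to $\mathbf{P}(X)\setminus X$ is not actually what saves the inequality here --- what saves it is that a $2$-element vertex cut of $H$ never produces three components, so it can never separate three vertices pairwise.
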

\begin{proof}
Let $H$ be the graph in Figure \ref{fig:grapg_G}. We note that $H$ is a $2$--connected non-hamiltonian graph, and  that $H$ has no independent sets of order 4.
Clearly, it is enough to show that $|X|\leq \min\{|U|:~U \mbox{ is a separator of } X\}$ for any independent set $X\subset V(H)$.

Let  $X$ be an independent set of $H$. Since $H$ is a $2$--connected and $\alpha(H)<4$, we can assume that $|X|=3$. Seeking a contradiction, suppose that there is
a separator $U$ of $X$ such that $|U|<|X|$. The $2$-connectivity of $H$ implies that $|U|=2$, and the definition of $U$ implies that $H\setminus U$ must have at least three components.
By simple inspection of $H$ we can conclude that there is no such $U$.
\end{proof}

As we mentioned in Section~\ref{s:computer}, in our search for some known result that allows us to simplify the proof of Theorem~\ref{thm:main} for $n\leq 8$ we found~\cite{plotnikov}, and when applying the given criteria to our small graphs we detected the inconsistency. After reading the proof of Statement~\ref{th:wrong}, we have seen that the origin of the error is a wrong interpretation of some results given in~\cite{hoede} (which involve contraction of edges).

The graph $H$ in Figure~\ref{fig:grapg_G} is a minor of the graph $D(P)$ shown in Figure \ref{fig:Ejem}~(c). In other words, $D(P)$
 contains a subset $E$ of edges such that  $H$ can be obtained from $D(P)\setminus E$ by a succession of edge contractions.

\begin{figure}[H]
	\centering
	\includegraphics[width=0.3\textwidth]{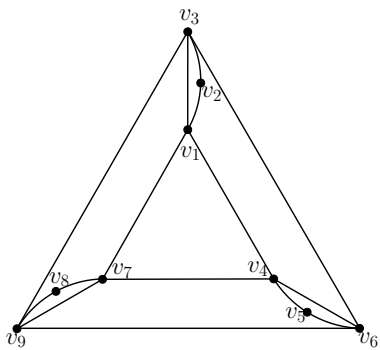}
	\caption{\small  A counterexample to Statement~\ref{th:wrong}.}
		\label{fig:grapg_G}
\end{figure}



\end{document}